\newtheorem{theorem}{Theorem}[section]
\theoremstyle{plain}
\newtheorem{corollary}[theorem]{Corollary}
\newtheorem{definition}[theorem]{Definition}
\numberwithin{equation}{section}
\begin{document}
\title[Alzer Inequality for Hilbert Spaces Operators]{Alzer Inequality for Hilbert Spaces Operators}
\author[A. Morassaei]{Ali Morassaei}
\address[A. Morassaei and F. Mirzapour]{Department of Mathematics, Faculty of Sciences, University of Zanjan, P. O. Box 45195-313, Zanjan, Iran}
\email{morassaei@znu.ac.ir, f.mirza@znu.ac.ir}
\author[F. Mirzapour]{Farzollah Mirzapour}
\keywords{Operator concavity, selfadjoint operator, arithmetic mean, geometric mean, harmonic mean}
\thanks{2010\textit{\ AMS Math. Subject Classification. }Primary 47A63; Secondary 15A42, 46L05, 47A30.}

\begin{abstract}
In this paper, we give the Alzer inequality for Hilbert space operators as follows:

Let $A, B$ be two selfadjoint operators on a Hilbert space $\mathcal H$ such that $0 < A, B \le \frac{1}{2}I$, where $I$ is identity operator on $\mathcal H$. Also, assume that $A \nabla_\lambda B:=(1-\lambda)A+\lambda B$ and $A \sharp_\lambda B:=A^{\frac{1}{2}}\left(A^{-\frac{1}{2}}BA^{-\frac{1}{2}}\right)^\lambda A^{\frac{1}{2}}$ are arithmetic and geometric means of $A, B$, respectively, where $0 < \lambda < 1$. We show that if $A$ and $B$ are commuting, then
$$
B'~\nabla_\lambda~A' - B'~\sharp_\lambda~A' \le A~\nabla_\lambda~B - A~\sharp_\lambda~B\,,
$$
where $A':=I-A$, $B':=I-B$ and $0 < \lambda \le \frac{1}{2}$. Also, we state an open problem for an extension of Alzer inequality.
\end{abstract}

\maketitle

\section{Introduction and preliminaries}
Let $x_1,\cdots,x_n\in (0, \frac{1}{2}]$ and $\lambda_1,\cdots,\lambda_n>0$ with $\sum_{j=1}^n\lambda_j=1$. We denote by $A_n$ and $G_n$,
the arithmetic and geometric means of $x_1,\cdots,x_n$ respectively, i.e
$$
A_n=\sum_{j=1}^n\lambda_jx_j,\quad\quad G_n=\prod_{j=1}^nx_j^{\lambda_j}\,,
$$
and also by $A'_n$ and $G'_n$, the arithmetic and geometric means of $1-x_1,\cdots,1-x_n$ respectively, i.e.
$$
A'_n=\sum_{j=1}^n\lambda_j(1-x_j),\quad\quad G'_n=\prod_{j=1}^n(1-x_j)^{\lambda_j}\,.
$$
Alzer proved the following inequality and its refinement \cite{A1, A2}
\begin{equation}\label{e1.1}
A'_n-G'_n \le A_n - G_n.
\end{equation}

Throughout the paper, let $\mathbb{B}(\mathcal H)$ denote the
algebra of all bounded linear operators acting on a complex Hilbert
space $(\mathcal H,\langle \cdot,\cdot\rangle)$ and $I$ is the identity
operator. In the case when $\dim \mathcal{H} = n$, we identify
$\mathbb{B}(\mathcal H)$ with the full matrix algebra $\mathcal{M}_n(\mathbb{C})$
 of all $n \times n$ matrices with entries in the complex field and denote its identity
 by $I_n$. A selfadjoint operator $A\in\mathbb{B}(\mathcal{H})$ is called
positive (strictly positive) if $\langle Ax,x\rangle\ge0~~(\langle Ax,x\rangle > 0)$ holds for every $x\in \mathcal{H}$
and then we write $A\ge0~(A>0)$ \cite{B, FMPS}. For every selfadjoint operators $A,B\in\mathbb{B}(\mathcal{H})$, we say $A\le B$ if $B-A \ge 0$.
Let $f$ be a continuous real valued function defined on an interval
$[\alpha, \beta]$. The function $f$ is called operator decreasing if
$B\le A$ implies $f(A)\le f(B)$ for all $A,B$ with spectra in
$[\alpha, \beta]$. A function $f$ is said to be operator concave on
$[\alpha, \beta]$ if
$$
\lambda f(A) + (1 - \lambda)f(B)\le f(\lambda A + (1 - \lambda)B)
$$
for any selfadjoint operators $A,B\in\mathbb{B}(\mathcal{H})$ with spectra in $[\alpha,
\beta]$ and all $\lambda\in[0,1]$.

\medskip

The main result of this paper is the following theorem:

\medskip

\textbf{Theorem}~(Alzer Inequality).
Suppose that $A, B\in \mathbb{B}(\mathcal{H})$ are commuting operators such that $0 < A \le B \le \frac{1}{2}I$, and let $A':=I-A$ and $B'=I-B$. If $0 < \lambda \le \frac{1}{2}$, then
$$
B'~\nabla_\lambda~A' - B'~\sharp_\lambda~A' \le A~\nabla_\lambda~B - A~\sharp_\lambda~B\,.
$$

\section{Main results}
In this section, we state an identity between arithmetic and geometric mean for positive operators and then we consequent the Alzer inequality.

We recall that, the \textit{weighted arithmetic mean} $\nabla_\lambda$ and the \textit{weighted geometric mean} (the \textit{$\lambda$-power mean}) $\sharp_\lambda$ defined for $0<\lambda<1$:
\begin{align*}
A~\nabla_\lambda~B&:=(1-\lambda)A+\lambda B\,,\\
A~\sharp_\lambda~B&:=A^{\frac{1}{2}}\left(A^{-\frac{1}{2}}BA^{-\frac{1}{2}}\right)^\lambda A^{\frac{1}{2}}\,.
\end{align*}
Also, we know that $A ~\sharp_\lambda~ B=B ~\sharp_{1-\lambda}~ A$ and if $AB=BA$ then $A \sharp_\lambda B=A^{1-\lambda}B^\lambda$.

Notice that if $\lambda=\frac{1}{2}$ in above definitions, we have the classic arithmetic and geometric means and denote its as follows:
\begin{align*}
{\mathbf A}&:=A~\nabla~B=A~\nabla_{\frac{1}{2}}~B=\frac{1}{2}A+\frac{1}{2}B\,,\\
{\mathbf G}&:=A~\sharp~B=A~\sharp_{\frac{1}{2}}~B=A^{\frac{1}{2}}\left(A^{-\frac{1}{2}}BA^{-\frac{1}{2}}\right)^{\frac{1}{2}}A^{\frac{1}{2}}\,.
\end{align*}
Also, we know that ${\mathbf A'}=A'~\nabla~B'$ and ${\mathbf G'}=A'~\sharp~B'$.

In the following theorem, we state distance between the arithmetic mean and the geometric mean as an infinite series.
\begin{theorem}\label{t3.1}
Assume that $A$ and $B$ are two positive operators in $\mathbb{B}(\mathcal{H})$ such that $\|B^{-\frac{1}{2}}AB^{-\frac{1}{2}}\|<1$ and $\lambda \in (0, 1)$. Then we have
\begin{equation}\label{e3.1}
A ~\nabla_{\lambda} ~B - A~ \sharp_{\lambda} ~B=\sum_{k=2}^\infty (-1)^{k-1}\binom{1-\lambda}{k}\left(AB^{-1}-I\right)^kB\,.
\end{equation}
\end{theorem}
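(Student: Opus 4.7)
My plan is to reduce both means to functional-calculus expressions in the single self-adjoint operator $X := B^{-1/2} A B^{-1/2}$, expand the fractional power $X^{1-\lambda}$ as a Newton binomial series about $X = I$, and then transfer the result back to the form involving $AB^{-1} - I$ via a similarity.

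First I would use the symmetry identity $A \sharp_\lambda B = B \sharp_{1-\lambda} A$, which is immediate from the functional-calculus definition of $\sharp_\lambda$. It gives
\begin{align*}
A \sharp_\lambda B &= B^{1/2} X^{1-\lambda} B^{1/2}, \\
A \nabla_\lambda B &= B^{1/2}\bigl[I + (1-\lambda)(X - I)\bigr] B^{1/2},
\end{align*}
and the bracketed expression for $A \nabla_\lambda B$ is precisely the partial sum through $k=1$ of the Newton expansion of $X^{1-\lambda}$ about $X = I$.

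Second, I would apply the operator binomial series
$$X^{1-\lambda} = \bigl(I + (X-I)\bigr)^{1-\lambda} = \sum_{k=0}^\infty \binom{1-\lambda}{k} (X-I)^k$$
and subtract. The $k = 0$ and $k = 1$ terms cancel, so the difference collapses to
$$A \nabla_\lambda B - A \sharp_\lambda B = -\sum_{k=2}^\infty \binom{1-\lambda}{k}\, B^{1/2}(X-I)^k B^{1/2}.$$
Finally, I would substitute via the similarity $AB^{-1} = B^{1/2} X B^{-1/2}$, which yields $AB^{-1} - I = B^{1/2}(X - I) B^{-1/2}$ and, by iteration, $B^{1/2}(X-I)^k B^{1/2} = (AB^{-1}-I)^k B$. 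Inserting this into the previous display and collecting signs produces the claimed series.

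The hardest part will be justifying convergence of the operator-valued binomial series in operator norm, which requires $\|X - I\| < 1$. This does not follow from $\|X\| < 1$ alone, but it does follow once one uses strict positivity of $A$: then $\sigma(X)$ is a compact subset of $(0, \|X\|]$ bounded away from $0$, so $\sigma(X - I) \subset (-1, 0)$ and $\|X - I\| < 1$. Once that is in hand, absolute convergence follows from the scalar bound $\sum_k \bigl|\binom{1-\lambda}{k}\bigr|\|X - I\|^k < \infty$, and the rest of the argument is bookkeeping.
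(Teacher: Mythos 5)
Your plan is, in substance, exactly the paper's proof: write $A~\sharp_\lambda~B=B~\sharp_{1-\lambda}~A=B^{\frac{1}{2}}X^{1-\lambda}B^{\frac{1}{2}}$ with $X=B^{-\frac{1}{2}}AB^{-\frac{1}{2}}$, expand $X^{1-\lambda}=(I+(X-I))^{1-\lambda}$ by the binomial series, absorb the $k=0,1$ terms into $A~\nabla_\lambda~B$, and pass to $(AB^{-1}-I)^kB$ by the similarity $B^{\frac{1}{2}}(X-I)^kB^{\frac{1}{2}}=(AB^{-1}-I)^kB$; you are in fact more careful than the paper, which never discusses convergence. One point needs fixing, though: ``collecting signs'' does not produce the printed series. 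Your (correct) computation gives
\[
A~\nabla_\lambda~B-A~\sharp_\lambda~B=-\sum_{k=2}^\infty\binom{1-\lambda}{k}\left(AB^{-1}-I\right)^kB=\sum_{k=2}^\infty(-1)^{k-1}\binom{1-\lambda}{k}\left(I-AB^{-1}\right)^kB\,,
\]
whereas the statement (and the last display of the paper's own proof) carries both the factor $(-1)^{k-1}$ and the power $(AB^{-1}-I)^k$; since $\binom{1-\lambda}{k}$ itself has sign $(-1)^{k-1}$ for $k\ge2$, the printed formula differs from the true one in every odd-$k$ term. This is a sign slip in the paper, not in your algebra: the corrected form with $(I-AB^{-1})^k$, equivalently the $(B-A)^k$ expression, is precisely what Corollary \ref{c2.3} and Theorem \ref{t3.2} actually use. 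So you should state the corrected identity rather than assert agreement with the displayed one. Two smaller remarks on your convergence paragraph: in infinite dimensions, strict positivity in the paper's sense ($\langle Ax,x\rangle>0$ for $x\ne0$) does not bound $\sigma(X)$ away from $0$; what you need is that $A$ be boundedly invertible, which is implicit anyway in the definition of $\sharp_\lambda$ and in the hypothesis involving $B^{-\frac{1}{2}}$. And even in the borderline case $\|X-I\|=1$ the series still converges absolutely, because $\sum_{k}\left|\binom{1-\lambda}{k}\right|<\infty$ when $0<1-\lambda<1$.
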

\begin{proof}
By using the binomial series, we have
\begin{align}\label{e3.2}
\left(B^{-\frac{1}{2}}AB^{-\frac{1}{2}}\right)^{1-\lambda}&=\left(I+\left(B^{-\frac{1}{2}}AB^{-\frac{1}{2}}-I\right)\right)^{1-\lambda}\nonumber\\
&=I+\sum_{k=1}^\infty\binom{1-\lambda}{k}\left(B^{-\frac{1}{2}}AB^{-\frac{1}{2}}-I\right)^k\,.
\end{align}
Now, by multiplying each side \eqref{e3.2} by $B^{\frac{1}{2}}$, we get
\begin{align*}
B^{\frac{1}{2}}\Big(B^{-\frac{1}{2}}&AB^{-\frac{1}{2}}\Big)^{1-\lambda}B^{\frac{1}{2}}\\
&=B+\sum_{k=1}^\infty \binom{1-\lambda}{k}B^{\frac{1}{2}}\left(B^{-\frac{1}{2}}AB^{-\frac{1}{2}}-I\right)^kB^{\frac{1}{2}}\\
&=B+\binom{1-\lambda}{1}(A-B)+\sum_{k=2}^\infty\binom{1-\lambda}{k} B^{\frac{1}{2}}\left(B^{-\frac{1}{2}}AB^{-\frac{1}{2}}-I\right)^kB^{\frac{1}{2}}\\
&=B+(1-\lambda)(-1)(B-A)+\sum_{k=2}^\infty\binom{1-\lambda}{k} B^{\frac{1}{2}}\left[B^{-\frac{1}{2}}(A-B)B^{-\frac{1}{2}}\right]^kB^{\frac{1}{2}}\\
&=(1-\lambda)A+\lambda B-\sum_{k=2}^\infty (-1)^{k-1} \binom{1-\lambda}{k} \left[(A-B)B^{-1}\right]^kB\,,
\end{align*}
so, $B~ \sharp_{1-\lambda} ~A=A ~\nabla_{\lambda}~ B-\sum_{k=2}^\infty (-1)^{k-1} \binom{1-\lambda}{k} \left(AB^{-1}-I\right)^kB$, which completes the proof.
\end{proof}
We know that, if $A$ and $B$ are two commuting positive operators in $\mathbb{B}(\mathcal{H})$, then $AB$ is positive operator and $(AB)^{\frac{1}{2}}=A^{\frac{1}{2}}B^{\frac{1}{2}}$. Furthermore, if $B$ is invertible, then $AB^{-1}=B^{-1}A$. Also, we recall that if $A$ and $B$ are not commuting, then $AB$ is not necessarily positive. For example, $A=\left(\begin{array}{cc} 1 & 0 \\ 0 & 0 \end{array}\right)$ and $B=\left(\begin{array}{cc} 1 & 1 \\ 1 & 1 \end{array}\right)$ are positive but their product is not \cite[p. 309]{M}.

Now, by using the above statements and Theorem \ref{t3.1}, the following corollary is obvious.
\begin{corollary}\label{c2.3}
With the assumptions in Theorem \ref{t3.1}, if $A$ and $B$ are commuting, then
$$
A ~\nabla_{\lambda} ~B - A~ \sharp_{\lambda} ~B=\sum_{k=2}^\infty (-1)^{k-1}\binom{1-\lambda}{k}B^{\frac{1-k}{2}}(B-A)^kB^{\frac{1-k}{2}}\,.
$$
\end{corollary}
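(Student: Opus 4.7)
The plan is to apply Theorem \ref{t3.1} and reduce each summand of the series using only the hypothesis $AB=BA$; the entire derivation is purely algebraic, which is exactly why the author presents the conclusion as an immediate corollary.

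First I would record the consequences of commutativity. When $AB=BA$, every operator built from $A$, $B$ and from the continuous functional calculus of $B$ (in particular $B^{-1}$ and the fractional powers $B^{\pm 1/2}$) commutes pairwise, and hence also commutes with $B-A$. In particular,
\[
AB^{-1}-I \;=\; B^{-1}A-B^{-1}B \;=\; -B^{-1}(B-A).
\]

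Second I would raise to the $k$-th power. Since $B^{-1}$ commutes with $B-A$,
\[
(AB^{-1}-I)^{k} \;=\; (-1)^{k}\,B^{-k}(B-A)^{k}.
\]
Multiplying by $B$ on the right, and splitting $B^{1-k}=B^{(1-k)/2}\,B^{(1-k)/2}$ with $B^{(1-k)/2}$ passed freely through $(B-A)^{k}$, this becomes
\[
(AB^{-1}-I)^{k}B \;=\; (-1)^{k}\,B^{(1-k)/2}(B-A)^{k}B^{(1-k)/2}.
\]

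Third, I would substitute this identity term-by-term into the series of Theorem \ref{t3.1}, and combine the sign $(-1)^{k}$ just produced with the $(-1)^{k-1}$ already attached to the $k$-th coefficient of that series. What remains is precisely the right-hand side of Corollary \ref{c2.3}.

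The main obstacle is effectively non-existent: the whole argument is a routine chain of rearrangements enabled by commutativity, and the only care required is sign bookkeeping in passing from $A-B$ to $B-A$ and in symmetrizing the surrounding powers of $B$. Convergence of the resulting series is inherited automatically from the hypothesis $\|B^{-1/2}AB^{-1/2}\|<1$ under which Theorem \ref{t3.1} is established, so no new analytic input is needed.
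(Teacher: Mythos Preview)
Your plan is exactly the paper's: invoke Theorem~\ref{t3.1} and simplify each summand using $AB=BA$. The individual manipulations you record,
\[
AB^{-1}-I=-B^{-1}(B-A),\qquad (AB^{-1}-I)^{k}B=(-1)^{k}B^{\frac{1-k}{2}}(B-A)^{k}B^{\frac{1-k}{2}},
\]
are correct. The gap is precisely at the step you dismiss as ``sign bookkeeping.'' Substituting into the series of Theorem~\ref{t3.1} \emph{as it is stated} gives, for the $k$-th term, the coefficient
\[
(-1)^{k-1}\cdot(-1)^{k}\binom{1-\lambda}{k}=(-1)^{2k-1}\binom{1-\lambda}{k}=-\binom{1-\lambda}{k},
\]
which is \emph{not} the coefficient $(-1)^{k-1}\binom{1-\lambda}{k}$ appearing in Corollary~\ref{c2.3}. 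So ``what remains'' is not the asserted right-hand side, and your proof as written does not close.

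The source of the mismatch is a slip in the last displayed line of the proof of Theorem~\ref{t3.1}: following the binomial expansion there, one actually obtains
\[
A\,\nabla_{\lambda}\,B-A\,\sharp_{\lambda}\,B=-\sum_{k=2}^{\infty}\binom{1-\lambda}{k}(AB^{-1}-I)^{k}B,
\]
i.e.\ the coefficient should be $-\binom{1-\lambda}{k}$ rather than $(-1)^{k-1}\binom{1-\lambda}{k}$; the two agree only for even $k$. With this corrected version of the theorem, your substitution yields the coefficient $(-1)\cdot(-1)^{k}=(-1)^{k-1}$, and Corollary~\ref{c2.3} follows exactly as you outline. In short: your strategy and algebra match the paper's intended argument, but you should carry out the sign check rather than assert it, since doing so exposes (and repairs) an inconsistency between the stated forms of Theorem~\ref{t3.1} and Corollary~\ref{c2.3}.
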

In the following theorem we state the Alzer inequality for two commuting positive operator in $\mathbb{B}(\mathcal{H})$.
\begin{theorem}[Alzer Inequality]\label{t3.2}
Suppose that $A, B\in \mathbb{B}(\mathcal{H})$ are commuting operators such that $0 < A \le B \le \frac{1}{2}I$, and let $A':=I-A$ and $B'=I-B$. If $0 < \lambda \le \frac{1}{2}$, then
\begin{equation}\label{e3.3}
B'~\nabla_\lambda~A' - B'~\sharp_\lambda~A' \le A~\nabla_\lambda~B - A~\sharp_\lambda~B\,.
\end{equation}
\end{theorem}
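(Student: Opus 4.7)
The strategy is to represent both sides of inequality \eqref{e3.3} as operator power series via Corollary \ref{c2.3}, subtract, and then reduce the whole statement to a term-by-term positivity check. The only genuinely order-theoretic input needed is that $A\le B\le \tfrac{1}{2}I$ forces $A+B\le I$, i.e.\ $B\le I-A=A'$.

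Concretely, I would first apply Corollary \ref{c2.3} directly to the right-hand side; since $A$ and $B$ commute, the negative powers of $B$ can be combined and
\[
A\nabla_\lambda B-A\sharp_\lambda B=\sum_{k=2}^\infty (-1)^{k-1}\binom{1-\lambda}{k}(B-A)^k B^{1-k}.
\]
Then I would apply Corollary \ref{c2.3} again with the substitution $A\mapsto B'$, $B\mapsto A'$; the operators $A'$ and $B'$ commute, and $A'-B'=B-A$, so
\[
B'\nabla_\lambda A'-B'\sharp_\lambda A'=\sum_{k=2}^\infty (-1)^{k-1}\binom{1-\lambda}{k}(B-A)^k (A')^{1-k}.
\]
Since every operator in sight commutes with every other, subtracting the two series reduces \eqref{e3.3} to the single claim
\[
\sum_{k=2}^\infty (-1)^{k-1}\binom{1-\lambda}{k}(B-A)^k\bigl[B^{1-k}-(A')^{1-k}\bigr]\ge 0.
\]

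I would then establish positivity of each summand by combining three facts. A direct sign count gives $(-1)^{k-1}\binom{1-\lambda}{k}=\tfrac{\lambda(1-\lambda)(1+\lambda)\cdots(k-2+\lambda)}{k!}\ge 0$ for every $k\ge 2$ and every $\lambda\in(0,1)$. Next, $(B-A)^k\ge 0$ because $B\ge A$. Finally, since $1-k<0$, the inequality $B^{1-k}\ge(A')^{1-k}$ is equivalent, via the joint functional calculus of the commuting positive pair $B, A'$, to $B\le A'$, which is precisely $A+B\le I$ and hence follows from $A\le B\le\tfrac12 I$.

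The step I expect to be most delicate is justifying the series expansion itself, because Corollary \ref{c2.3} is inherited from Theorem \ref{t3.1}, which demands the \emph{strict} spectral bound $\|B^{-1/2}AB^{-1/2}\|<1$; this need not follow from $A\le B$ alone, and the same issue appears for the pair $(B',A')$. I would resolve this by a perturbation argument: replace $(A,B)$ by $(A_\varepsilon,B_\varepsilon)=\bigl((1-\varepsilon)A,\,(1-\varepsilon)B+\tfrac{\varepsilon}{2}I\bigr)$, verify that $0<A_\varepsilon<B_\varepsilon\le\tfrac12 I$ and that both required norm bounds hold (with margin of order $\varepsilon$), run the termwise argument above for the perturbed pair, and send $\varepsilon\to 0^+$ using norm-continuity of both sides of \eqref{e3.3}.
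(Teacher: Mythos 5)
Your proof is correct and follows essentially the same route as the paper: expand both differences via Corollary \ref{c2.3}, use $A'-B'=B-A$ and $B\le A'$ (from $B\le\frac12 I$), the positivity of $(B-A)^k$, and the sign $(-1)^{k-1}\binom{1-\lambda}{k}>0$ to compare the series term by term. Your additional $\varepsilon$-perturbation to secure the strict norm hypothesis $\|B^{-1/2}AB^{-1/2}\|<1$ of Theorem \ref{t3.1} (and its analogue for the pair $(B',A')$) addresses a point the paper's proof passes over silently, so it is a welcome refinement rather than a different method.
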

\begin{proof}
It is clear that $0 < A \le B \le \frac{1}{2}I \le B' \le A' < I$ and also $A'B'=B'A'$. By using Corollary \ref{c2.3}, we obtain
\begin{equation}\label{e3.4}
A ~\nabla_{\lambda} ~B - A~ \sharp_{\lambda} ~B=\sum_{k=2}^\infty (-1)^{k-1}\binom{1-\lambda}{k}B^{\frac{1-k}{2}}(B-A)^kB^{\frac{1-k}{2}}\,,
\end{equation}
and
\begin{equation}\label{e3.5}
B' ~\nabla_{\lambda} ~A' - B'~ \sharp_{\lambda} ~A'=\sum_{k=2}^\infty (-1)^{k-1}\binom{1-\lambda}{k}A'^{\frac{1-k}{2}}(A'-B')^kA'^{\frac{1-k}{2}}\,.
\end{equation}
Since $A'-B'=B-A$, $B \le A'$ and $k \ge 2$ we have $A'^{\frac{1-k}{2}}(A'-B')^kA'^{\frac{1-k}{2}} \le B^{\frac{1-k}{2}}(B-A)^kB^{\frac{1-k}{2}}$. On the other hand, since $0 < \lambda \le \frac{1}{2}$ and $(-1)^{k-1}\binom{\alpha}{k} > 0$ for all $0 < \alpha < 1$ and $k \ge 2$, we get $(-1)^{k-1}\binom{1-\lambda}{k}A'^{\frac{1-k}{2}}(A'-B')^kA'^{\frac{1-k}{2}} \le (-1)^{k-1}\binom{1-\lambda}{k}B^{\frac{1-k}{2}}(B-A)^kB^{\frac{1-k}{2}}$, which completes the proof.
\end{proof}
\begin{corollary}
With the above notations, we have
$$
{\mathbf A}'-{\mathbf G}'\le {\mathbf A}-{\mathbf G}.
$$
\end{corollary}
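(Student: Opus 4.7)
The plan is to derive the corollary as a direct specialization of Theorem~\ref{t3.2} at $\lambda = 1/2$. First I would recall from the beginning of Section~2 the definitions ${\mathbf A} = A~\nabla_{\frac{1}{2}}~B$, ${\mathbf G} = A~\sharp_{\frac{1}{2}}~B$, ${\mathbf A}' = A'~\nabla_{\frac{1}{2}}~B'$, and ${\mathbf G}' = A'~\sharp_{\frac{1}{2}}~B'$, so that all four quantities are weighted means at the symmetric parameter value.

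Next I would rewrite the primed side to match the form appearing on the left of \eqref{e3.3}. The arithmetic mean is manifestly symmetric in its two arguments, so $A'~\nabla_{\frac{1}{2}}~B' = B'~\nabla_{\frac{1}{2}}~A'$. For the geometric mean, the identity $X~\sharp_\lambda~Y = Y~\sharp_{1-\lambda}~X$ already noted in the paper gives, at $\lambda = 1/2$, the equality $A'~\sharp_{\frac{1}{2}}~B' = B'~\sharp_{\frac{1}{2}}~A'$. Combining these yields
$$
{\mathbf A}' - {\mathbf G}' = B'~\nabla_{\frac{1}{2}}~A' - B'~\sharp_{\frac{1}{2}}~A'.
$$

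Finally, the inequality to be proved is now exactly the conclusion of Theorem~\ref{t3.2} with $\lambda = 1/2 \in (0, 1/2]$. The hypotheses of that theorem (commuting positive operators bounded by $\tfrac{1}{2}I$) are inherited from the ambient notation; if $A$ and $B$ are not already ordered, one can swap them without affecting either side since both $A \nabla_{\frac{1}{2}} B - A \sharp_{\frac{1}{2}} B$ and its primed analogue are symmetric in $(A,B)$ at $\lambda = 1/2$. There is no substantive obstacle in this argument, as the corollary is essentially a reformulation of the main theorem at the symmetric parameter value; the only small point worth checking is the midpoint symmetry of the geometric mean, which is immediate from a property already recorded in the paper.
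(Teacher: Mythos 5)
Your proposal is correct and follows exactly the paper's own route: specialize Theorem~\ref{t3.2} to $\lambda=\tfrac{1}{2}$ and use the symmetry $A\,\nabla\,B=B\,\nabla\,A$ and $A\,\sharp\,B=B\,\sharp\,A$ (the latter from $X\,\sharp_\lambda\,Y=Y\,\sharp_{1-\lambda}\,X$) to identify the primed side with the left-hand side of \eqref{e3.3}. Your closing remark about swapping $A$ and $B$ is a harmless extra (though note it only covers the case $B\le A$, not incomparable operators), and the paper simply assumes the ordering of Theorem~\ref{t3.2}.
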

\begin{proof}
Sufficient in the Theorem \ref{t3.2} we set $\lambda=\frac{1}{2}$ and use of this fact that $A \nabla  B=B \nabla A$ and $A \sharp B=B \sharp A$.
\end{proof}
\section{Open problem}
In this section, we present an extension of Alzer inequality for Hilbert space
operators as an open problem. For this purpose, first, we express some fundamental properties of the geometric mean. For to see many details c.f. \cite{A, ALM, JLY, RLC}.

The geometric mean $\mathbf G_2:=\mathbf G_2(A, B)$ of two positive operators $A$ and $B$ was introduced as the solution of the matrix optimization problem, \cite{A}
\begin{equation}\label{e1.1}
\mathbf G_2(A, B):=\max\left\{X~:~X^*=X,\quad\left(\begin{array}{cc}A & X\\ X & B\end{array}\right)\ge 0\right\}\,.
\end{equation}
This operator mean can be also characterized as the strong limit of the arithmetic-harmonic sequence $\{\Phi_n(A, B)\}$ defined by \cite{AR, FF}
\begin{equation}\label{e1.2}
\begin{cases}
\Phi_0(A, B)=\frac{1}{2}A+\frac{1}{2}B\,,\\
\Phi_{n+1}(A, B)=\frac{1}{2}\Phi_n(A, B)+\frac{1}{2}A(\Phi_n(A, B))^{-1}B\quad(n\ge 0)\,.
\end{cases}
\end{equation}
We know that, the explicit form of $\mathbf G_2(A, B)$ is given by
\begin{equation}\label{e1.3}
\mathbf G_2(A, B)=A^{\frac{1}{2}}\left(A^{-\frac{1}{2}}BA^{-\frac{1}{2}}\right)^{\frac{1}{2}}A^{\frac{1}{2}}\,.
\end{equation}
M. Ra\"{i}ssouli, F. Leazizi and M. Chergui in \cite{RLC} described an extended algorithm of \eqref{e1.2} involving several positive operators. The main idea of such an extension comes from the fact that the arithmetic, harmonic and geometric means of $m$ positive real numbers $a_1, a_2, \cdots, a_m$ can be written recursively as follows
\begin{equation}\label{e1.4}
\mathbf A_m(a_1, a_2, \cdots, a_m):=\frac{1}{m}\sum_{j=1}^ma_j=\frac{1}{m}a_1+\frac{m-1}{m}\mathbf A_{m-1}(a_2, \cdots, a_m)\,,
\end{equation}
\begin{equation}\label{e1.5}
\mathbf H_m(a_1, a_2, \cdots, a_m):=\left(\frac{1}{m}\sum_{j=1}^ma_j^{-1}\right)^{-1}=\left(\frac{1}{m}a_1^{-1}+\frac{m-1}{m}\mathbf H_{m-1}(a_2, \cdots, a_m)\right)^{-1}\,,
\end{equation}
\begin{equation}\label{e1.6}
\mathbf G_m(a_1, a_2, \cdots, a_m):=\sqrt[m]{a_1a_2 \cdots a_m}=a_1^{\frac{1}{m}}\left(\mathbf G_{m-1}(a_2, \cdots, a_m)\right)^{\frac{m-1}{m}}\,.
\end{equation}
The extensions of \eqref{e1.4} and \eqref{e1.5} when the scalers variable $a_1, a_2, \cdots, a_m$ are positive operators can be immediately given, by setting $A^{-1}=\lim_{\epsilon\downarrow0}(A+\epsilon I)^{-1}$. We know that the power geometric mean of two positive operators $A$ and $B$ defined by
\begin{equation}\label{e1.7}
\Phi_{\frac{1}{m}}(A, B):=B^{\frac{1}{2}}\left(B^{-\frac{1}{2}}AB^{-\frac{1}{2}}\right)^{\frac{1}{m}}B^{\frac{1}{2}}\,.
\end{equation}

Assume that $A_1,\cdots,A_m\in\mathbb{B}(\mathcal{H})\quad (m \ge 2)$ are $m$ positive operators. In this section we introduce the geometric mean of $A_1,\cdots,A_m$. By using the algorithm \eqref{e1.2}, we define the recursive sequence $\{T_n\}:=\{T_n(A, B)\}$, where $A, B\in\mathbb{B}(\mathcal{H})$ are two positive operators, as follows
\begin{equation}\label{e2.1}
\begin{cases}
T_0=\frac{1}{m}A+\frac{m-1}{m}B\,;\\
T_{n+1}=\frac{m-1}{m}T_n+\frac{1}{m}A(T_n^{-1}B)^{m-1}\quad(n \ge 0)\,.
\end{cases}
\end{equation}
In what follows, for simplicity we write $\{T_n\}$ instead of $\{T_n(A, B)\}$ and we set
$$
T_n^{(-1)}=\left(T_n(A^{-1}, B^{-1})\right)^{-1}\,.
$$
In the following theorem Ra\"{i}ssouli, Leazizi and Chergui \cite{RLC} proved the convergence of the operator sequence $\{T_n\}$.
\begin{theorem}\label{t2.1}
With the above assumptions, the sequence $\{T_n\}:=\{T_n(A, B)\}$ converges decreasingly in $\mathbb{B}(\mathcal{H})$, with the limit
\begin{equation}\label{e2.2}
\lim_{n\uparrow+\infty}T_n:=\Phi_{\frac{1}{m}}(A, B)=B^{\frac{1}{2}}\left(B^{-\frac{1}{2}}AB^{-\frac{1}{2}}\right)^{\frac{1}{m}}B^{\frac{1}{2}}\,.
\end{equation}
Further, the next estimation holds
\begin{equation}\label{e2.3}
0 \le T_n-\Phi_{\frac{1}{m}}(A, B) \le \left(1-\frac{1}{m}\right)^n\left(T_0-T_0^{(-1)}\right)\quad\forall n \ge 0\,.
\end{equation}
\end{theorem}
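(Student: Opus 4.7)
The plan is to reduce the operator iteration \eqref{e2.1} to a scalar one by a conjugation that forces every iterate to stay inside the commutative $C^*$-algebra generated by a single operator. Set $X := B^{-1/2}AB^{-1/2}$ and $S_n := B^{-1/2}T_n B^{-1/2}$. A direct substitution, using $T_n^{-1}B = B^{-1/2}S_n^{-1}B^{1/2}$ and hence $(T_n^{-1}B)^{m-1} = B^{-1/2}S_n^{-(m-1)}B^{1/2}$, transforms \eqref{e2.1} into
\[
S_{n+1} = \frac{m-1}{m}S_n + \frac{1}{m}XS_n^{-(m-1)}, \qquad S_0 = \frac{1}{m}X + \frac{m-1}{m}I.
\]
Since $S_0$ is a polynomial in $X$, a simple induction shows that every $S_n$ commutes with $X$, so $XS_n^{-(m-1)}$ is selfadjoint and $S_n$ is a positive selfadjoint function of $X$ for all $n$. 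By the continuous functional calculus, the problem reduces spectrally to the scalar iteration
\[
s_{n+1} = f_x(s_n) := \frac{m-1}{m}s_n + \frac{x}{m\,s_n^{m-1}}, \qquad s_0 = \frac{1}{m}x + \frac{m-1}{m},
\]
applied at every $x \in \sigma(X) \subset (0,\infty)$.

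The scalar analysis is the classical convergence theorem for a Newton-type iteration for the $m$-th root. Weighted AM--GM gives $f_x(s) \ge s^{(m-1)/m}(x/s^{m-1})^{1/m} = x^{1/m}$, and applied to $s_0$ also shows $s_0 \ge x^{1/m}$, so by induction $s_n \ge x^{1/m}$. On $[x^{1/m},\infty)$ one has $f_x(s) \le s$ (equivalent to $s^m \ge x$), so $\{s_n\}$ decreases monotonically there, and its limit must be the unique fixed point $x^{1/m}$ of $f_x$ in that interval. Lifting back via the functional calculus, $S_n \downarrow X^{1/m}$ in the operator order, so $T_n = B^{1/2}S_n B^{1/2}$ decreases to $B^{1/2}X^{1/m}B^{1/2} = \Phi_{1/m}(A,B)$; the convergence is in the strong operator topology by the monotone convergence theorem for bounded decreasing sequences of selfadjoint operators.

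For the rate \eqref{e2.3}, differentiate to obtain $f_x'(s) = \frac{m-1}{m}(1-x/s^m) \in [0,(m-1)/m]$ for $s \ge x^{1/m}$, so the mean value theorem gives $s_{n+1}-x^{1/m} \le (1-1/m)(s_n-x^{1/m})$ and, by induction, $s_n-x^{1/m} \le (1-1/m)^n(s_0-x^{1/m})$. Lifting by conjugation with $B^{1/2}$ yields $T_n - \Phi_{1/m}(A,B) \le (1-1/m)^n(T_0 - \Phi_{1/m}(A,B))$. Finally, the operator weighted HM--GM inequality gives $T_0^{(-1)} = (\frac{1}{m}A^{-1} + \frac{m-1}{m}B^{-1})^{-1} \le A \sharp_{1/m} B = \Phi_{1/m}(A,B)$, so $T_0 - \Phi_{1/m}(A,B) \le T_0 - T_0^{(-1)}$, which closes \eqref{e2.3}.

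The chief obstacle is the first step: spotting that the conjugation $T_n \mapsto B^{-1/2}T_n B^{-1/2}$ collapses the apparently noncommutative recursion into a commutative one. Once this is recognized, the selfadjointness and positivity of $T_n$ (which are not at all obvious from the asymmetric form $A(T_n^{-1}B)^{m-1}$) become automatic, and the whole convergence-with-rate statement follows from elementary analysis of the single-variable Newton iteration for the $m$-th root together with the standard Kubo--Ando ordering of the weighted harmonic, geometric, and arithmetic means.
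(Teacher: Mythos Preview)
The paper does not prove this theorem; it merely quotes it from Ra\"{i}ssouli--Leazizi--Chergui \cite{RLC}, so there is no in-paper argument to compare against. Your proof is correct and is, in fact, essentially the argument one finds in \cite{RLC}: conjugate by $B^{-1/2}$ to land in the commutative $C^*$-algebra generated by $X=B^{-1/2}AB^{-1/2}$, reduce via functional calculus to the scalar Newton iteration for $x^{1/m}$, and read off monotone convergence and the contraction rate from elementary calculus.

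One small notational slip: in your last step you write $\Phi_{1/m}(A,B)=A\sharp_{1/m}B$, but with the paper's convention $A\sharp_\lambda B=A^{1/2}(A^{-1/2}BA^{-1/2})^{\lambda}A^{1/2}$ one has $\Phi_{1/m}(A,B)=B\sharp_{1/m}A=A\sharp_{(m-1)/m}B$ (this is exactly \eqref{e2.4}). The inequality you need is unaffected: $T_0^{(-1)}=\bigl(\tfrac{1}{m}A^{-1}+\tfrac{m-1}{m}B^{-1}\bigr)^{-1}=B\,!_{1/m}\,A\le B\sharp_{1/m}A=\Phi_{1/m}(A,B)$ by the usual weighted harmonic--geometric ordering, so the chain $T_n-\Phi_{1/m}(A,B)\le(1-1/m)^n(T_0-\Phi_{1/m}(A,B))\le(1-1/m)^n(T_0-T_0^{(-1)})$ goes through exactly as you intended.
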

Notice that $\Phi_{\frac{1}{m}}(A, B)=A^{\frac{1}{m}}B^{1-\frac{1}{m}}$ when $A$ and $B$ are two commuting positive operators and so, $\Phi_{\frac{1}{m}}(A, I)=A^{\frac{1}{m}}$, $\Phi_{\frac{1}{m}}(I, B)=B^{1-\frac{1}{m}}$ for all positive operators $A$ and $B$. Also, the map $(A, B)\mapsto \Phi_{\frac{1}{m}}(A, B)$ satisfies the conjugate symmetry relation, i.e.
\begin{equation}\label{e2.4}
\Phi_{\frac{1}{m}}(A, B)=A^{\frac{1}{2}}\left(A^{-\frac{1}{2}}BA^{-\frac{1}{2}}\right)^{\frac{m-1}{m}}A^{\frac{1}{2}}=\Phi_{\frac{m-1}{m}}(B, A)\,.
\end{equation}
In the same paper, we see the definition of geometric operator mean of $A_1,\cdots,A_m$ as follows.
\begin{definition}\label{d2.1}
Assume that $A_1,\cdots,A_m\in\mathbb{B}(\mathcal{H})$ are the positive operators. The geometric operator mean of $A_1,\cdots,A_m$ is defined by the relationship
\begin{equation}\label{e2.5}
\mathbf G_m(A_1, A_2, \cdots, A_m)=\Phi_{\frac{1}{m}}\left(A_1,\mathbf G_{m-1}(A_2, \cdots, A_m)\right)\,.
\end{equation}
\end{definition}
It is easy to verify that, if $A_1,\cdots,A_m$ are commuting, then
$$
\mathbf G_m(A_1, A_2, \cdots, A_m)=\left(A_1, A_2 \cdots A_m\right)^{\frac{1}{m}}\,.
$$
In particular, for all positive operators $A\in\mathbb{B}(\mathcal{H})$ we have $\mathbf G_m(A, A, \cdots, A)=A$ and $\mathbf G_m(I, I, \cdots, A, I, \cdots, I)=A^{\frac{1}{m}}$. Also, we know that $(A, B)\mapsto \mathbf G_2(A, B)$ is symmetric, but $\mathbf G_m$ is not symmetric for $m \ge 3$, for more details see \cite[Example 2.3]{RLC}.

The geometric operator mean $\mathbf G_m(A_1, A_2, \cdots, A_m)$ has nice properties that for seeing more details c.f. \cite{RLC}.

\textbf{Open Problem}. Let $A_1,\cdots,A_n$ be $n$ selfadjoint operators on an Hilbert space $\mathcal H$ such that $0<A_j\le\frac{1}{2}I$, where $I$ is identity operator on $\mathcal H$ \cite{B, FMPS}. Also, let ${\mathbf A}_n:={\mathbf A}_n(A_1,\cdots,A_n)$ and ${\mathbf G}_n:={\mathbf G}_n(A_1,\cdots,A_n)$ be arithmetic and geometric means of $A_1,\cdots,A_n$ \cite{RLC}, and ${\mathbf A}'_n:={\mathbf A}_n(A'_1,\cdots,A'_n)$ and ${\mathbf G}'_n:={\mathbf G}_n(A'_1,\cdots,A'_n)$ be arithmetic and geometric means of $A'_1,\cdots,A'_n$ where $A'_j:=I-A_j\quad(j=1,\cdots,n)$, respectively. Then it seems that
$$
{\mathbf A}'_n-{\mathbf G}'_n\le {\mathbf A}_n-{\mathbf G}_n.
$$


\end{document}